\documentclass[reqno,a4paper,11pt]{amsart}
%\usepackage{amssymb}
%%%%%%%%%%%%%%%%%%%%%%%%%%%%%%%%%%%%%%%%%%%%%%%
%\typeout{FULLPAGE}
% \topmargin 0pt
% \advance \topmargin by-\headheight
% \advance \topmargin by-\headsep
% \textheight 8.9in
 \oddsidemargin 10pt
 \evensidemargin 10pt
 \marginparwidth 0.95in
 \textwidth 6.0in

\usepackage{mathtools} %superimpose objects
%%%%%%%%%%%%%%%%%%%%%%%%%%%%%%%%%%%%%%%%%%%%%%%
\usepackage{amsmath,amsthm,amsbsy,amssymb, comment}

\theoremstyle{plain}
\newtheorem{theorem}{Theorem}

\newtheorem{lemma}{Lemma}
\newtheorem{proposition}{Proposition}

%%%%%%%%%%%%%%%%%%%%%%%%%%%%%%%%%%%%%%%%%%%%%%%

\newcommand{\cA}{\mathcal{A}}

\renewcommand{\Re}{\operatorname{Re}}

\newcommand{\NN}{\mathbb{N}}

\theoremstyle{definition}

\theoremstyle{remark}

%\numberwithin{equation}{section}

% change-the-font-of-figure-captions
% KOMA-Script provides commands for customizing fonts of certain elements:
% \setkomafont{caption}{\footnotesize\itshape}
% \setkomafont{captionlabel}{\usekomafont{caption}}
%\usepackage[skip=2pt,font=tiny]{caption}
% \usepackage[font={small,it}]{caption}
\usepackage[font={small}]{caption}

%% pentru imagini cu subfigures
\usepackage{subfigure}  % package for subfigure formatting
\usepackage[pdftex]{graphicx}
\graphicspath{{IMAGES/}{./}}
\usepackage[backref=page]{hyperref} % arată pagina la care e făcută referința
% % On arxiv:
% % ! LaTeX Error: Option clash for package hyperref. ???
%%%%%%%%%%%%%%%%%%%%%%%%%%%%%%%%%%%%%%%%%%%%%%%%%%%%%%%%%%%%%%%%%%%%%%%%%%%
\begin{document}
\title[A bias parity question for Sturmian words]
{A bias parity question for Sturmian words}

\author[Cristian Cobeli]{Cristian Cobeli}
\address{"Simion Stoilow" Institute of Mathematics of the Romanian Academy, 21 Calea Grivitei 
Street, P. O. Box 1-764, Bucharest 014700, Romania}
\email{cristian.cobeli@imar.ro}

\author{Alexandru Zaharescu}
\address[Alexandru Zaharescu]{Department of Mathematics, University of Illinois, 1409 West Green 
Street, Urbana, IL 61801, USA.}

\address[Alexandru Zaharescu]{"Simion Stoilow" Institute of Mathematics of the Romanian Academy, 21 
Calea Grivitei 
Street, P. O. Box 1-764, Bucharest 014700, Romania}
\email{zaharesc@illinois.edu}

\subjclass[2010]{Primary 68R15; Secondary 05A05, 11M41}
% 05A05 Permutations, words, matrices
% 68R15 Combinatorics on words
% 11M41 Other Dirichlet series and zeta functions 

\thanks{Key words and phrases: Sturmian words, complexity of words, Dirichlet series.}

\dedicatory{This  article$^*$ is dedicated to Professor Solomon Marcus %
(1925 - 2016).
}

\begin{abstract}
We analyze a natural parity problem on the divisors associated to Sturmian words. We find a clear 
bias towards the odd divisors and obtain a sharp asymptotic estimate for the average of 
the difference odd-even function tamed by a mollifier, which proves various experimental results.
\end{abstract}
\maketitle

\renewcommand*{\thefootnote}{\fnsymbol{footnote}}
\footnotetext[1]{A preliminary version of this paper was initiated in 2011 by the second author, 
inspired by the articles Marcus~\cite{M2004} and Marcus and Monteil~\cite{MM2006} and a fruitful 
discussion with Professor Solomon Marcus, in 2010, at the Faculty of Mathematics of University of 
Bucharest.}
\renewcommand*{\thefootnote}{\arabic{footnote}}
\setcounter{footnote}{0}

\section{Introduction}

Sturmian words are remarkable objects that lie at the frontier between order and chaos in the ample 
comprising world of binary sequences.
They have interesting properties of which a few characterize them throughly. 
One of them says that a Sturmian word is a binary sequence that is not ultimately periodic and has 
minimal 
complexity, that is, it contains exactly $n+1$ distinct blocks of $n$ consecutive letters for each 
$n\ge 0.$ 
% We present a short overview on Sturmian words in Section~\ref{SectionOverview}.
Sturmian words where first introduced in 1940 by Morse and Hedlund~\cite{MH1940} and since then, 
they have become a topic of intensive research~\cite{HKW2018}, \cite{K2013}, \cite{LR2007}, 
\cite{L2011}, 
\cite{LL2015}, \cite{M2004}, \cite{MM2006},  \cite{M2000}, \cite{OEIS},
\cite{PW2017},
\cite{RV2017}.

Since an infinite word formed by just two letters is given by the sequence of natural numbers 
indicating the 
positions of just one of the two letters, a natural question is if this sequence has any special 
properties 
in the case of Sturmian words. Thus our object is to investigate the following parity problem that 
has an 
arithmetical flavor.

Suppose $w=w_1w_2\cdots$ is a binary word with letters from the alphabet $\cA=\{a,b\}$. 
For each $n>0$, denote by $o_w(n)$ the number of divisors $j\mid n$ for which  $w_j=b$ and the 
multiplicative-complement $n/j$ is odd, and similarly, let $e_w(n)$ be the number of divisors $j\mid 
n$ for 
which  $w_j=b$ and its multiplicative-complement $n/j$ is even.
Thus
\begin{equation*}%\label{eq}
	\begin{split}
	o_w(n)&:=|\{j\in\NN : j \text{ divides } n, w_j=b, n/j \text{ odd } \}|,\\
	e_w(n)&:=|\{j\in\NN : j \text{ divides } n, w_j=b, n/j \text{ even } \}|\,.
	\end{split}
\end{equation*}
The behavior of the parity functions is quite irregular, as can be seen in 
Figure~\ref{FigFibonacciOddEven-2-300} drawn for the Fibonacci word $w=abaababaaba\dots$, a 
generic, recursively generated Sturmian word~\cite{B1986}, \cite{M2009}, 
\mbox{\cite[A003849]{OEIS}}.

We measure the deviation from the equilibrium of parity by the difference
\begin{equation}\label{defD}
	\begin{split}
	D_w(n) = o_w(n)-e_w(n)\,.
	\end{split}
\end{equation}
The question is whether, for an arbitrary Sturmian word, there exists a bias in the distribution 
towards evens or odds and if the difference function is accurate enough to weight any possible 
dependence on the word $w$.

\advance\leftmargini -2.3em
\begin{quote}
     \textbf{Question 1.} \textsl{For any arbitrary Sturmian word $w$, is there a particular 
tendency of 
$D_w(n)$ of 
being more positive than negative, or conversely, as $n$ increases towards infinity?} 
\end{quote}

%%%%%%%%%%%%%%%%%%%%%%%%%%%%%%%%%%%%%%%%%
\begin{figure}[ht]
 \centering
 \mbox{
 \subfigure{
    \includegraphics[width=0.48\textwidth]{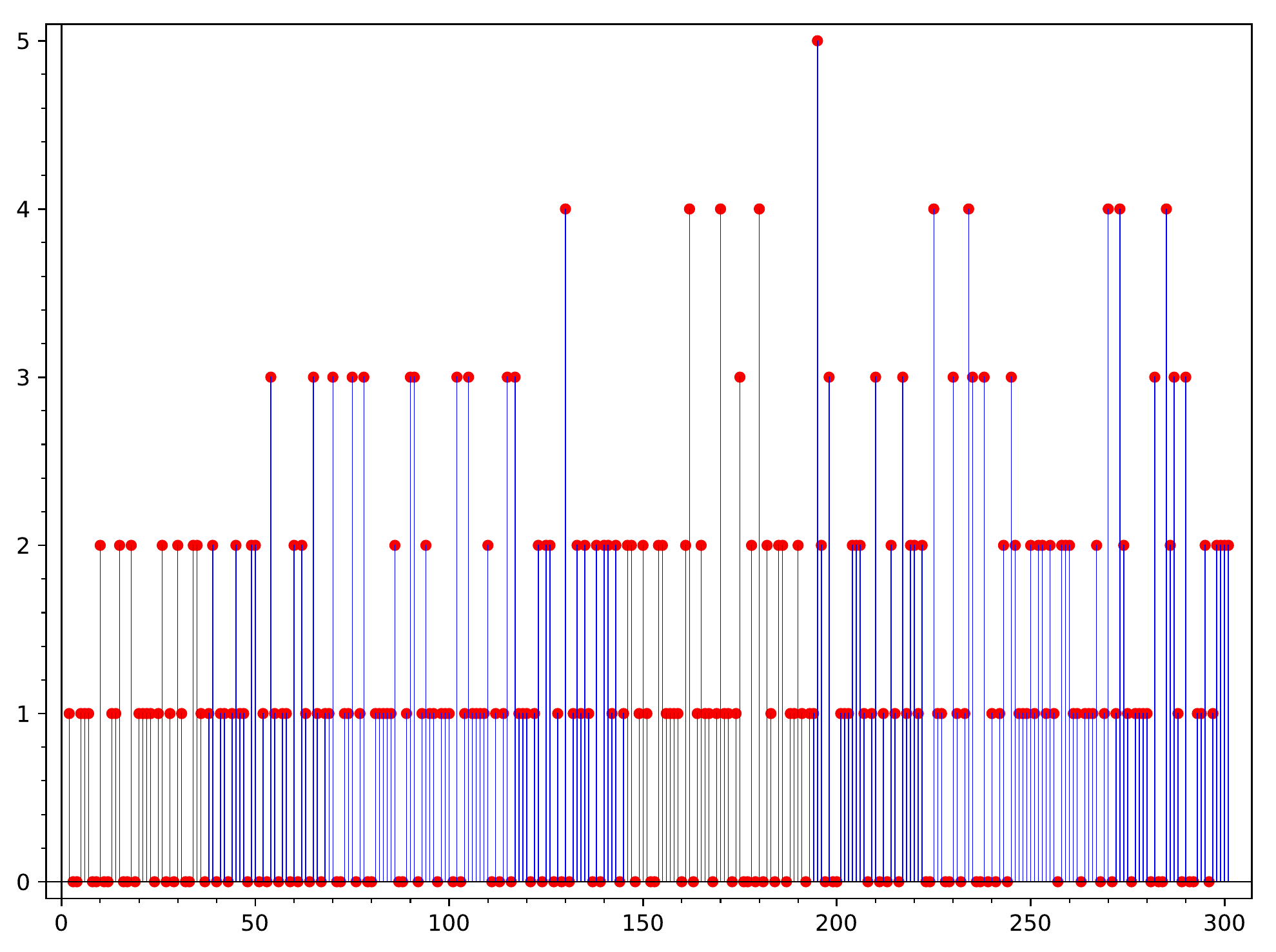}
%  \label{FigTomo5N}
 }%\quad
 \subfigure{
    \includegraphics[width=0.48\textwidth]{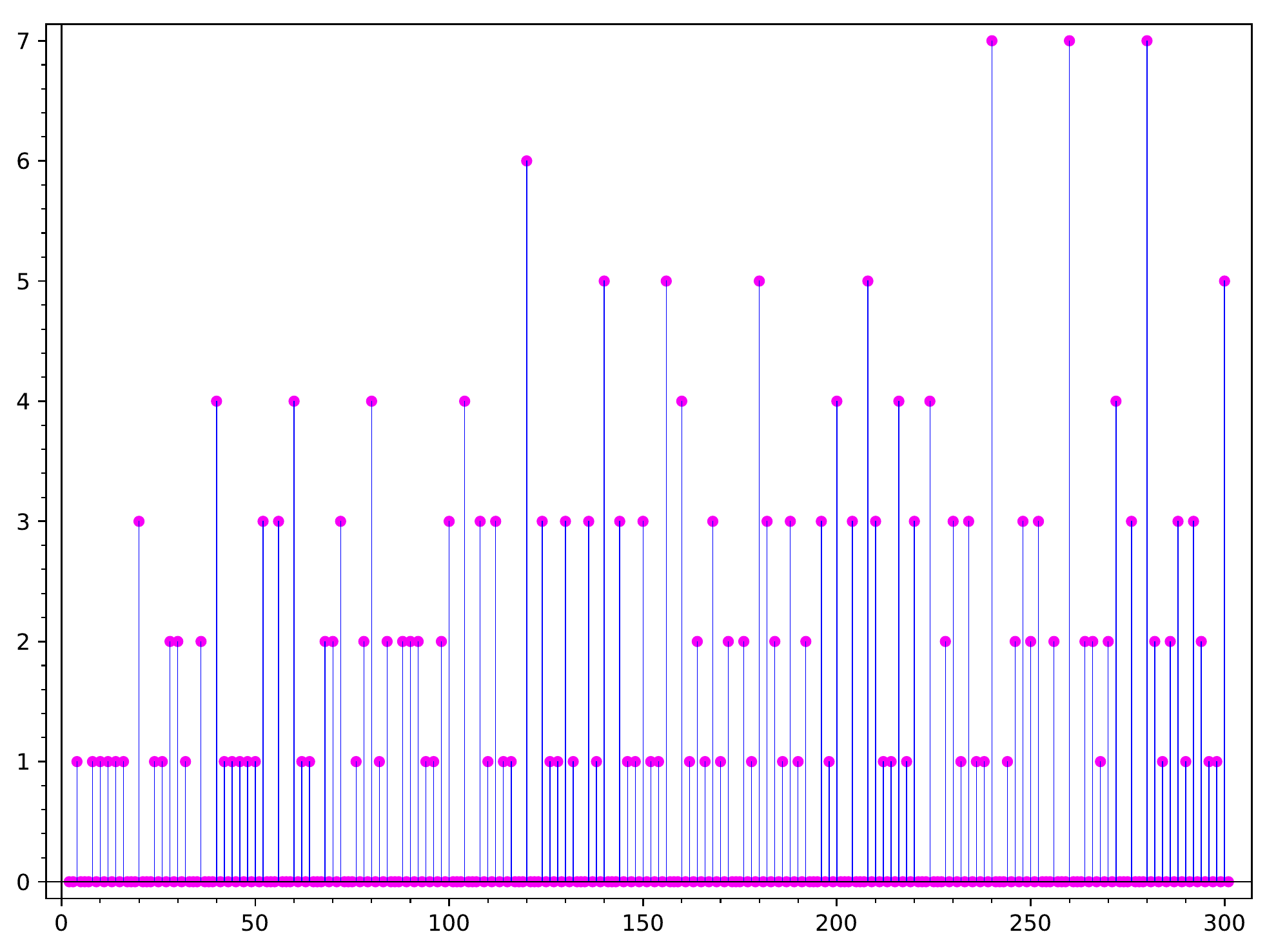}
%  \label{FigTomo5P}
 }
 }
\caption{The Fibonacci parity functions $o_w(n)$ (left) and $e_w(n)$ (right) for 
$n\in[2,300]$.
}
 \label{FigFibonacciOddEven-2-300}
 \end{figure}

At first look, while perhaps for low values of $n$, the function~\eqref{defD} that counts the 
deviation between the number of even or odd divisors may take somewhat disparate values, perhaps in 
the long run or at least on average, we should not expect any bias on the positive or negative 
values over the normal statistical variance.
At least that happens in similar questions, such as the Lehmer problem~\cite{CZ2001} or on the 
parity of pairs of residue classes and their inverses (\cite{CGZ2003}, 
\cite{CVZ2003}, \cite{CVZ2012}, \cite{CZ2002}, \cite{CZ2006}). Although the balance is most likely 
tilted in the 
case of primes, where up to some limit, are more preponderant those of the form $4k + 3$ than those 
of the form $4k + 1$ (Chebyshev's bias, see Rubinstein and 
Sarnak~\cite{RS94}) or the more general Shanks-R\'enyi prime races problem 
(see Lamzouriz~\cite{L12}).

In the case of the Fibonacci word, the distribution of $D_w(n)$ looks still quite random, but if we 
calculate its average, 
$\sum_{n=1}^x D_w(n)$,
we observe a clear tendency of linear increase (see 
Figure~\ref{FigFibonacci2-1000}). This shows a strong bias towards the odd divisors. Is there an 
explanation for such a strong discrepancy. 
We will see that the same behavior is characteristic and does not depend of the Fibonacci word $w$ 
and in Section~\ref{SectionAllPositiveIntegers} we will see that the appearance of exactly that 
slope of increase of the average of $D_w(n)$ is quite natural.
%%%%%%%%%%%%%%%%%%%%%%%%%%%%%%%%%%%%%%%%%%%%%%%%%%
\begin{figure}[ht]
 \centering
 \mbox{
 \subfigure{
    \includegraphics[width=0.48\textwidth]{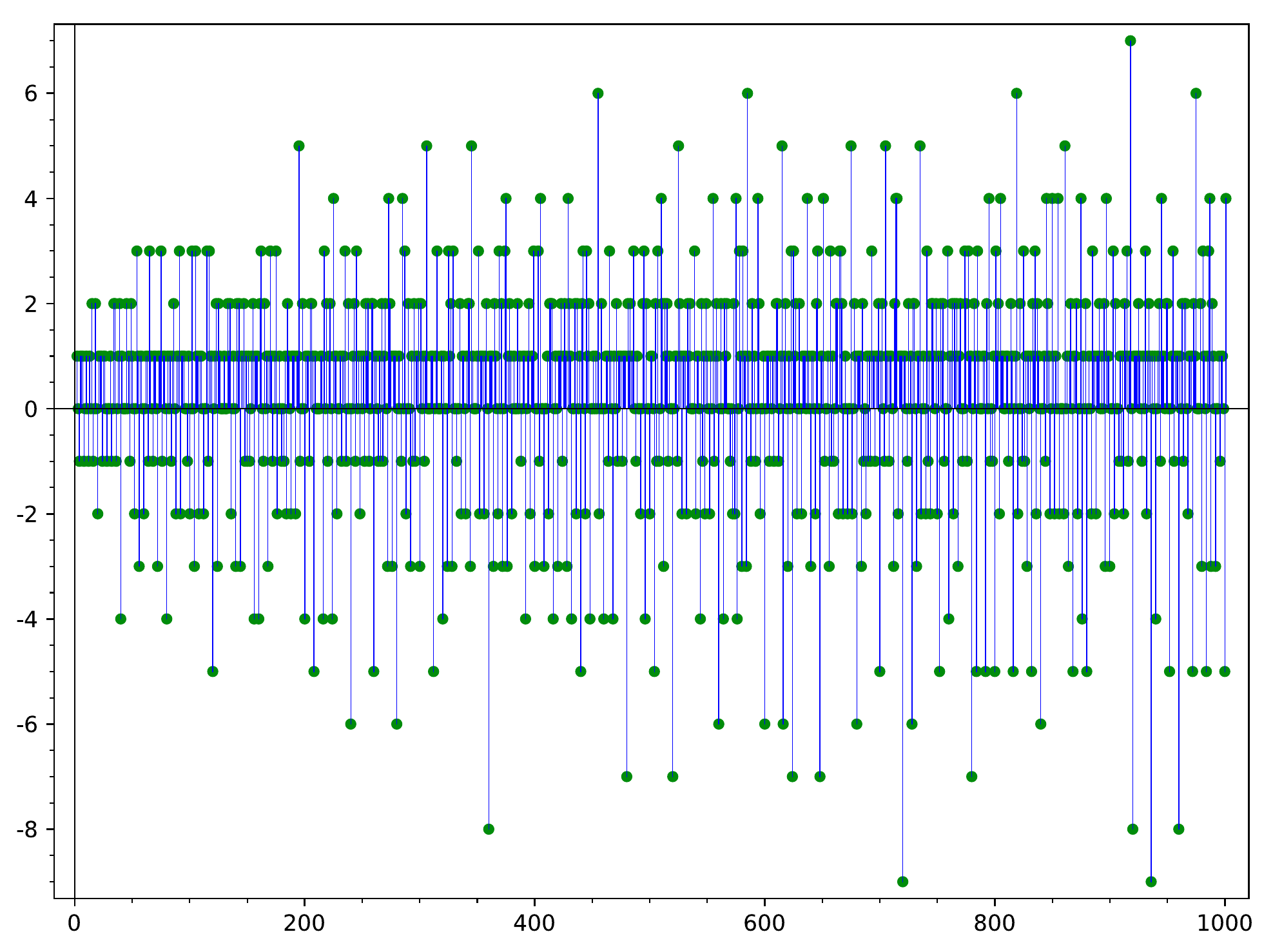}
%  \label{FigTomo5N}
 }%\quad
 \subfigure{
    \includegraphics[width=0.48\textwidth]{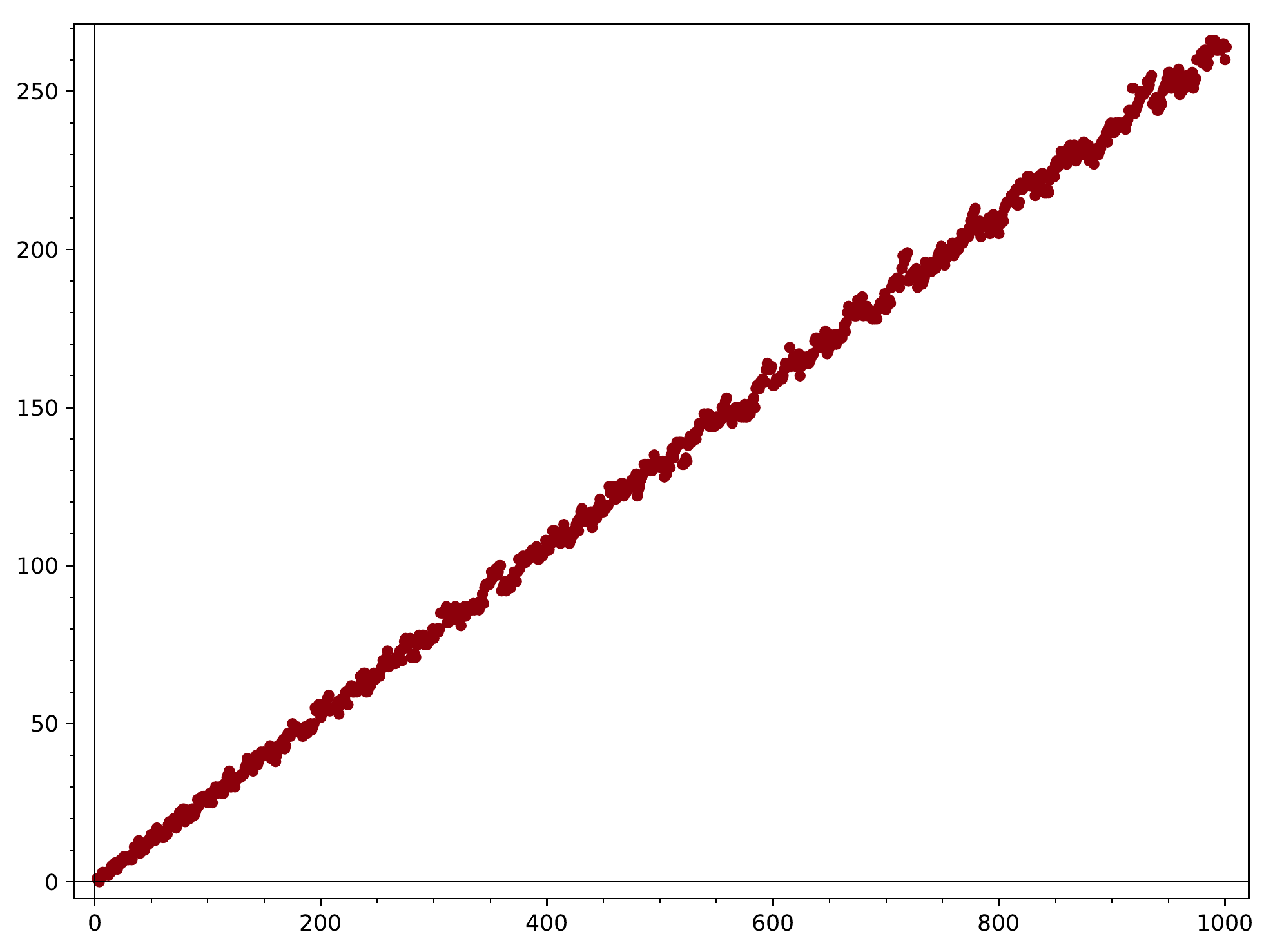}
%  \label{FigTomo5P}
 }
 }
\caption{The spread of 	$D_w(n)$ (left) and its partial averages $\sum_{n=1}^x D_w(n)$ for 
%in the interval 
$x\in [2, 1000]$ (right).	
In the images, $w$ is the Fibonacci word.
}
 \label{FigFibonacci2-1000}
 \end{figure}

For any Sturmian word $w$ over the alphabet $A = \{ a, b\}$, we denote by $\beta_w$ the limit 
proportion of the occurrence of letter $b$, that is,
\begin{equation*}%\label{e6}
\beta_w:= \lim_{n\rightarrow \infty} \frac {|\{ 1\le j\le n :  w(j) = b\}|}{n}\,.
\end{equation*}
The existence of $\beta_w$ is assured for any Sturmian word~\cite[Chapter 9]{AS2003}. For example, 
for the Fibonacci word, its precise value is $(3-\sqrt{5})/2$.

Our treatment of Question 1 conveniently tames the partial averages of the difference function  
by a mollifier. Thus, we wish to evaluate
\begin{equation}\label{e14}
M_w(x) = \sum_{n=1}^x D_w(n) \left(1-\frac nx\right).
\end{equation}
In particular, we would like to answer to the following question:
does $M_w(x)$ have constant sign for $x$ large enough, or does it have infinitely many
changes of sign?  As we shall see below, $M_w(x)$ is positive for $x$ large enough. We 
will actually prove a stronger statement, obtaining a sharp asymptotic formula for $M_w(x)$.
\begin{theorem}\label{Theorem1} Let $w$ be a Sturmian word.
Then, for
any $\delta>0$,
\begin{equation}\label{eq24}
M_w(x) = \frac{\beta_w\log 2}{2}\,x + O_\delta\left(x^{\frac13+\delta}\right)\,.
\end{equation}
\end{theorem}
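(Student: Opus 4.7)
The plan is a Dirichlet-series / contour-integration argument. I would first recognise $D_w = a \ast \chi$, where $a_j = \mathbf{1}\{w_j = b\}$ and $\chi(k) = (-1)^{k+1}$, so that the Dirichlet series factor as
\[
\sum_{n\ge 1}\frac{D_w(n)}{n^s} \;=\; A_w(s)\,\eta(s),\qquad A_w(s) := \sum_{j\ge 1}\frac{a_j}{j^s},\quad \eta(s) := (1-2^{1-s})\zeta(s),
\]
absolutely convergent in $\Re s > 1$. The mollifier $1 - n/x$ in the definition of $M_w(x)$ turns the associated Perron transform into a clean Mellin--Barnes integral with an extra $1/(s(s+1))$ factor:
\[
M_w(x) \;=\; \frac{1}{2\pi i}\int_{c-i\infty}^{c+i\infty}A_w(s)\,\eta(s)\,\frac{x^s}{s(s+1)}\,ds,\qquad c > 1.
\]

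The main term should come from shifting this contour past $s = 1$. Every Sturmian word has bounded discrepancy, $\sum_{j\le N}a_j = \beta_w N + O(1)$, so by Abel summation $A_w(s) = \beta_w\zeta(s) + G_w(s)$ where $G_w(s) := s\int_1^\infty \epsilon_w(t)\,t^{-s-1}\,dt$ with $\epsilon_w(t) := \sum_{j\le t}a_j - \beta_w t$, and $G_w$ is analytic in $\Re s > 0$. Thus $A_w(s)$ extends analytically to $\Re s > 0$ with only a simple pole at $s=1$ of residue $\beta_w$. Since $\eta(s)$ is entire with $\eta(1) = \log 2$ and $x^s/(s(s+1))\big|_{s=1} = x/2$, the residue at $s=1$ equals $\tfrac{\beta_w\log 2}{2}\,x$, which is precisely the main term in the theorem. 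I would then shift the contour to the vertical line $\Re s = 1/3 + \delta$, crossing only this pole, and try to bound the remaining integral by $O_\delta(x^{1/3+\delta})$.

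This last estimate is where the work lies. Convexity gives $\eta(s) \ll_\epsilon |t|^{1/3+\epsilon}$ on the new line, and $|s(s+1)|^{-1} \ll |t|^{-2}$ is free, but the naive bound $G_w(s) \ll |t|$ coming from $|\epsilon_w(t)| = O(1)$ leaves the integrand of size $|t|^{-2/3+\epsilon}$, still not integrable at infinity. To save something, I would exploit the explicit shape of the Sturmian discrepancy: writing $\epsilon_w(t)$ as a constant plus a sawtooth in $t\alpha + \rho$ (with $\alpha = \beta_w$ and $\rho$ the intercept) and expanding the sawtooth in its Fourier series yields
\[
G_w(s) \;=\; (\{\rho\}-\tfrac12) \;+\; \sum_{m\neq 0}\frac{e^{2\pi i m\rho}}{2\pi i m}\,I_m(s),\qquad I_m(s) := s\int_1^\infty e^{2\pi i m\alpha t}\,t^{-s-1}\,dt.
\]
A stationary-phase analysis of $I_m(s)$ around the saddle $t^\ast = |t|/(2\pi m\alpha)$ --- which lies in $[1,\infty)$ precisely when $|m| \ll |t|$, integration by parts taking care of the complementary range --- yields $|I_m(s)| \ll |m|^\sigma |t|^{1/2-\sigma}$ plus rapidly decaying tails. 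Summing in $m$ improves the bound to $A_w(s) \ll |t|^{1/2+\epsilon}$ on $\Re s = 1/3+\delta$; combined with the bound on $\eta$ the integrand becomes $O(|t|^{-7/6+\epsilon})$, which is integrable, and the shifted integral is accordingly $O_\delta(x^{1/3+\delta})$.

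The main obstacle is thus the analytic work in this last step: producing uniform, sharp stationary-phase estimates for the oscillatory integrals $I_m(s)$ and controlling their sum in $m$. The arithmetic structure of Sturmian words --- bounded discrepancy together with the mechanical form of $\epsilon_w$ --- is what supplies the extra cancellation beating the trivial $A_w(s)\ll|t|$ bound, and this improvement is exactly what allows the contour shift past $\Re s = 1/3$ to yield the claimed error term.
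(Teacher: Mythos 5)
Your setup coincides with the paper's: the convolution identity $D_w=(H\circ w)*h$, the factorization of the Dirichlet series as $(1-2^{1-s})\zeta(s)F(H,w,s)$, the mollified Perron integral with kernel $x^s/(s(s+1))$, the analytic continuation of $F(H,w,s)$ to $\Re s>0$ via the balance property $\bigl|\sum_{j\le n}H(w(j))-\beta_w n\bigr|\le 1$, and the residue $\tfrac{\beta_w\log 2}{2}x$ at $s=1$ are exactly the paper's steps. The divergence --- and the gap --- is in how the shifted integral is bounded. You shift to the full line $\Re s=\tfrac13+\delta$, correctly note that convexity-type bounds alone leave an integrand of size $|t|^{-1+\epsilon}$ (not integrable), and then assert a subconvex bound $A_w(s)\ll|t|^{1/2+\epsilon}$ coming from a Fourier/stationary-phase analysis of the discrepancy. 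That bound is not established by your sketch. For a mechanical word the discrepancy is (up to the upper/lower convention) $\epsilon_w(t)=\{\rho\}-\{\alpha\lfloor t\rfloor+\rho\}-\alpha\{t\}$: a sawtooth evaluated at $\alpha\lfloor t\rfloor+\rho$, not at $\alpha t+\rho$. The two differ by a bounded, indicator-type function, and bounded errors are precisely what you cannot discard here, since $O(1)$ in the discrepancy only yields the trivial $G_w(s)=O(|t|)$. With the correct representation the relevant objects are discrete exponential sums whose effective frequencies are $\|m\alpha\|$ rather than $m\alpha$, so your integrals $I_m(s)$, the saddle $t^\ast=|t|/(2\pi m\alpha)$, and the bound $|I_m(s)|\ll|m|^{\sigma}|t|^{1/2-\sigma}$ are not the right statements; in addition, truncating the non-absolutely-convergent sawtooth Fourier series, controlling endpoint and non-stationary ranges, and avoiding dependence on the Diophantine type of $\alpha$ all require real work that is absent. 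As written, the estimate that powers the error term is unproved.

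The paper sidesteps this entirely: it uses only the convexity bound $|F(H,w,\sigma+it)|\ll_\delta|t|^{1-\sigma+\delta}$ (a direct consequence of the $O(1)$ discrepancy), and instead of shifting to a full vertical line it moves only the portion $|t|\le T$ of the contour to $\Re s=\delta$, keeping $\Re s=1+\delta$ for $|t|>T$, then balances the pieces by choosing $T=x^{2/3}$; this truncated rectangular contour already gives $O_\delta\bigl(x^{\frac13+\delta}\bigr)$ with no subconvexity input. To repair your argument, either carry out the exponential-sum analysis rigorously (a substantial project, and strictly unnecessary for the stated exponent) or adopt the truncated contour, which removes the problematic step altogether.
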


Let us remark that since $\beta_w>0$, the asymptotic estimation~\ref{eq24} implies that $M_w(x)>0$ 
for all sufficiently large $x$, proving a strong bias towards the odd divisors for all Sturmian 
words.
We also mention that the Fibonacci word, the slope of the main term of the estimate~\eqref{eq24} is
$\frac{(3-\sqrt{5})\log 2}{4}\approx 0.264758$.
\medskip

The paper is organized as follows: in Section~\ref{SectionAllPositiveIntegers} we discuss the 
analogue question on the parity of the divisors of positive integers instead of Sturmian words, in 
the next two sections we employ the techniques developed for the Dirichlet series associated to 
Sturmian words and then use them to prove Theorem~\ref{Theorem1} in the last section.

%%%%%%%%%%%%%%%%%%%%%%%%%%%%%%%%%%%%%%%%%%%%%%%%%%%%%%%%%%%%%%%%%%%%%%%%%%%
\section{The parity problems for the sequence of positive 
integers}\label{SectionAllPositiveIntegers}
In this section we look on the parity problem for the infinite word  $u=bbb\dots$, whose letters 
are all equal, with $b$. The word $u$ is not a Sturmian word, but its analysis puts on 
perspective the more complex cases.

In the following, for simplicity, we will drop the subscript and 
write 
$o(n), e(n)$ and $D(n)$ instead of 
$o_u(n), e_u(n)$ and $D_u(n)$.

Like in the case of Sturmian words, the values of these parity functions  are very irregular (see 
Figure~\ref{FigOddEven-2-300}) and the involvement of the primes is part of the motive.

%%%%%%%%%%%%%%%%%%%%%%%%%%%%%%%%%%%%%%%%%%%%%%%%%%
\begin{figure}[ht]
 \centering
 \mbox{
 \subfigure{
    \includegraphics[width=0.48\textwidth]{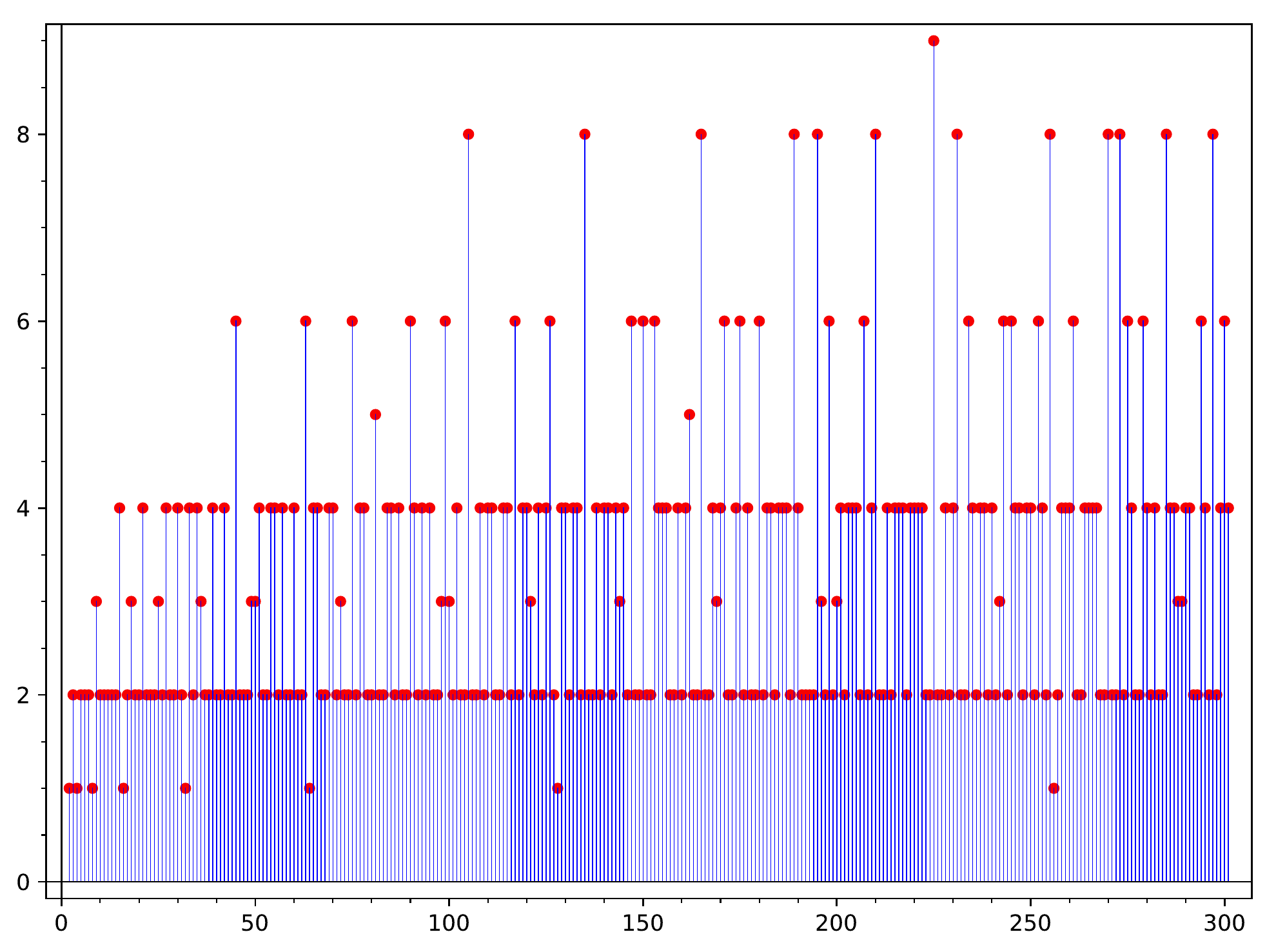}
%  \label{FigTomo5N}
 }%\quad
 \subfigure{
    \includegraphics[width=0.48\textwidth]{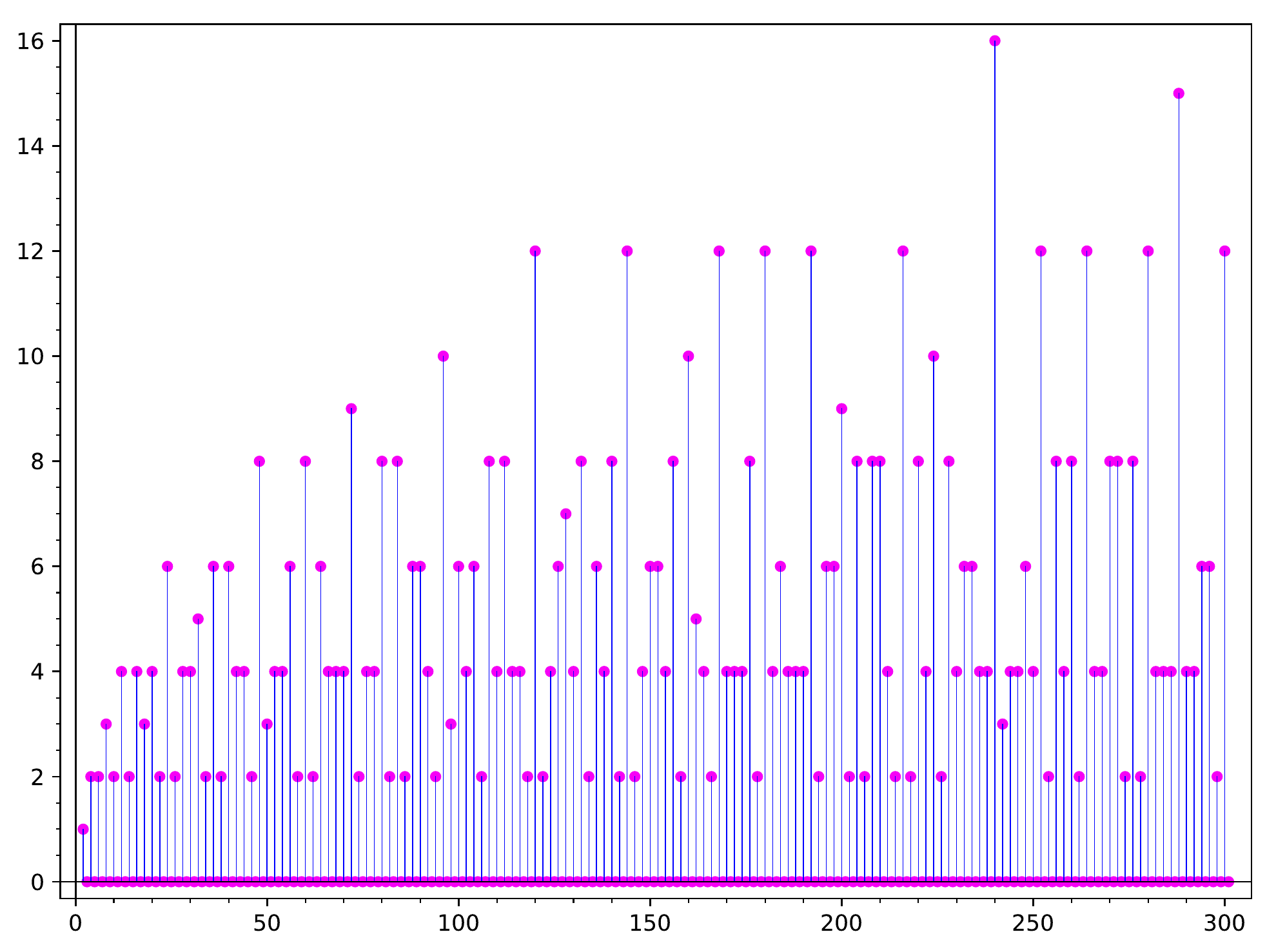}
%  \label{FigTomo5P}
 }
 }
\caption{The values of 	$o(n)$ (left) and $e(n)$ (right) for $n\in[2,300]$.
}
 \label{FigOddEven-2-300}
 \end{figure}

For any $n\ge 1$, the parity functions  $o(n), e(n)$  can easily be obtained if the decomposition in 
the prime factors of $n$ is known. Indeed, let $n=2^\alpha r$, with $\alpha>0$, $r$ odd and
$r=p_1^{\alpha_1} \cdots p_k^{\alpha_k}$. Then, all the divisors of $n$ are the terms of the 
sum obtained after all multiplications are done in the formal product
\[
 (1+2+\cdots+2^\alpha)(1+p_1+\cdots+p_1^{\alpha_1})\cdots (1+p_k+\cdots+p_k^{\alpha_k})\,.
\]
In particular, we see that the total number of divisors of $n$ is equal to 
$(\alpha+1)(\alpha_1+1)\cdots(\alpha_k+1)=(\alpha+1)d(r)$, where $d(\cdot)$ denotes the number of 
divisors function.

Now, if $\alpha=0$, $n$ being odd has no even divisors, so it follows that $e(n)=0$ and 
$o(n)=o(r)=(\alpha_1+1)\cdots(\alpha_r+1)=d(r)$.
If $\alpha\ge 1$, the number of odd divisors is still being equal to $d(r)$, while for each odd 
divisor of $n$ it corresponds $\alpha$ even divisors (those obtained by multiplying it by $2, 
2^2,\dots,2^\alpha$). Thus, we have the general formulas
\begin{equation}\label{eqoe}
   \begin{split}
   \begin{cases}
    o(n)=&d(r)\\ %=(\alpha_1+1)\cdots(\alpha_r+1)\\
     e(n)=&\alpha\cdot d(r)
   \end{cases}
   ,\quad\text{for $n=2^\alpha r$, with $r$ odd.}
   \end{split}
\end{equation}
Two special cases, in which $e(n)$ attains its minimum and its maximum values occur. The minimum of
$e(n)$ appears often, since any odd $n$ has no even divisors. Thus $e(n)=0$ and $o(n)=d(n)$  for 
$n$ odd.
In this case a local maximum, $o(n)=2^k$, is attained if $n=1\cdot 3\cdot 5\cdots (2k+1)$.
At the other end, if $n$ is a power of $2$, then one is the only odd divisor of $n$, so
$e(n)=\alpha$ and $o(n)=1$  if $n=2^\alpha$.

%%%%%%%%%%%%%%%%%%%%%%%%%%%%%%%%%%%%%%%%%%%%%%%
\begin{figure}[ht]
 \centering
 \mbox{
 \subfigure{
    \includegraphics[width=0.48\textwidth]{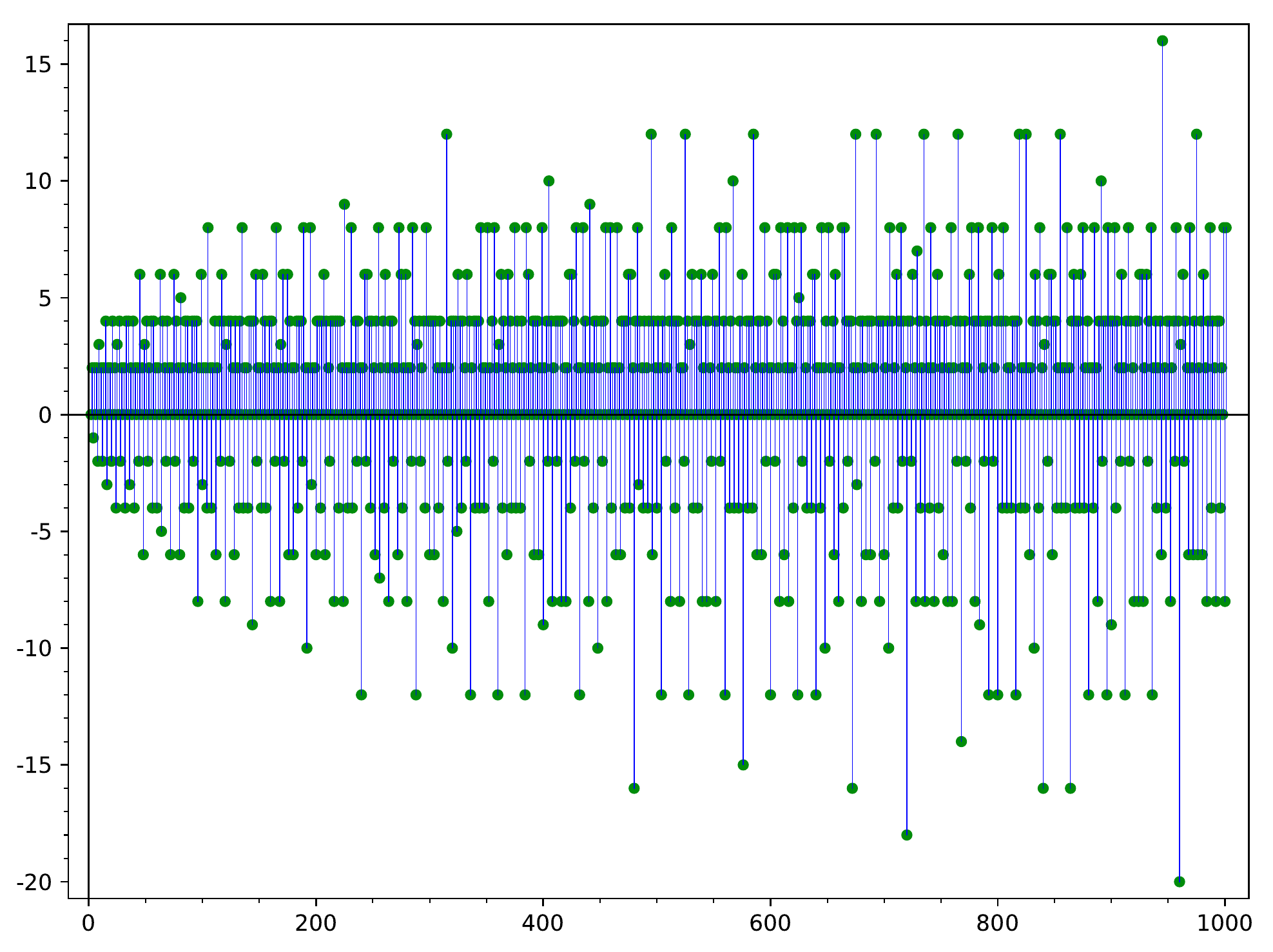}
%  \label{FigTomo5N}
 }%\quad
 \subfigure{
    \includegraphics[width=0.48\textwidth]{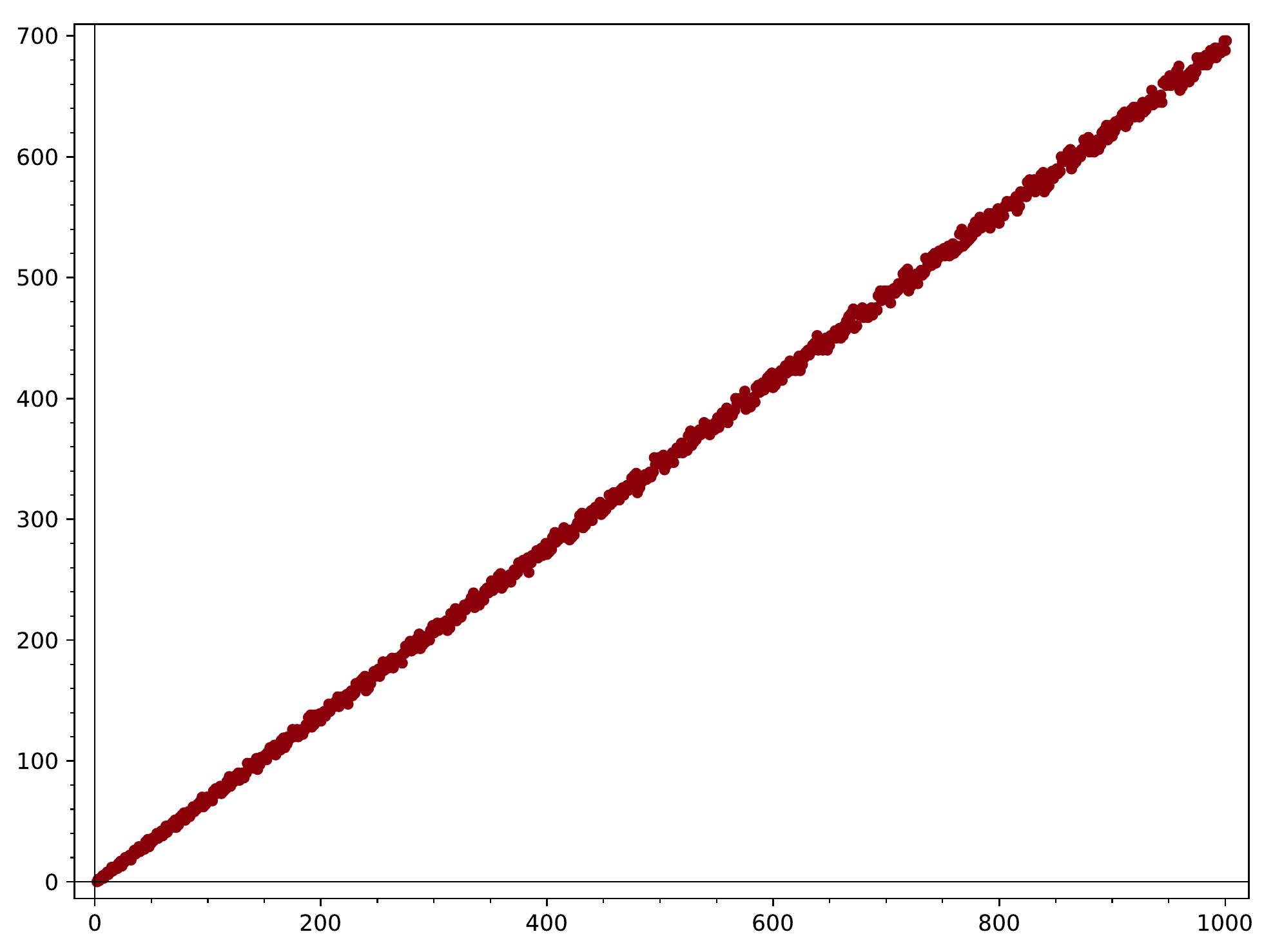}
%  \label{FigTomo5P}
 }
 }
\caption{The spread of 	$D(n)$ and its partial averages in the interval $[2, 1000]$.	
	}
 \label{Fig2-1000}
 \end{figure}

%%%%%%%%%%%%%%%%%%%%%%%%%%%%%%%%%%%%%%%%%%%%%%%%%
\begin{figure}[ht]
 \centering
 \mbox{
 \subfigure{
    \includegraphics[width=0.48\textwidth]{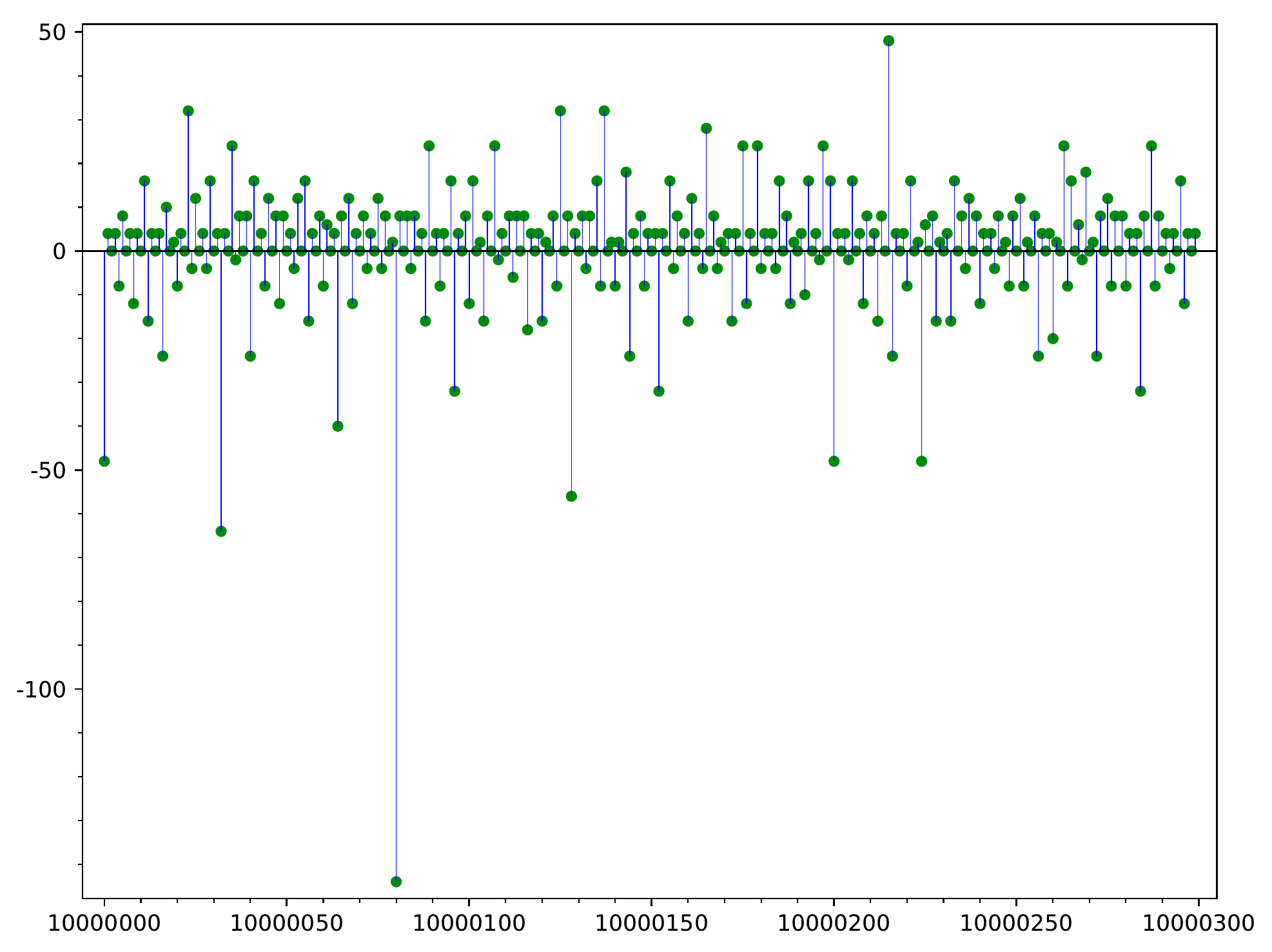}
%  \label{FigTomo5N}
 }%\quad
 \subfigure{
    \includegraphics[width=0.48\textwidth]{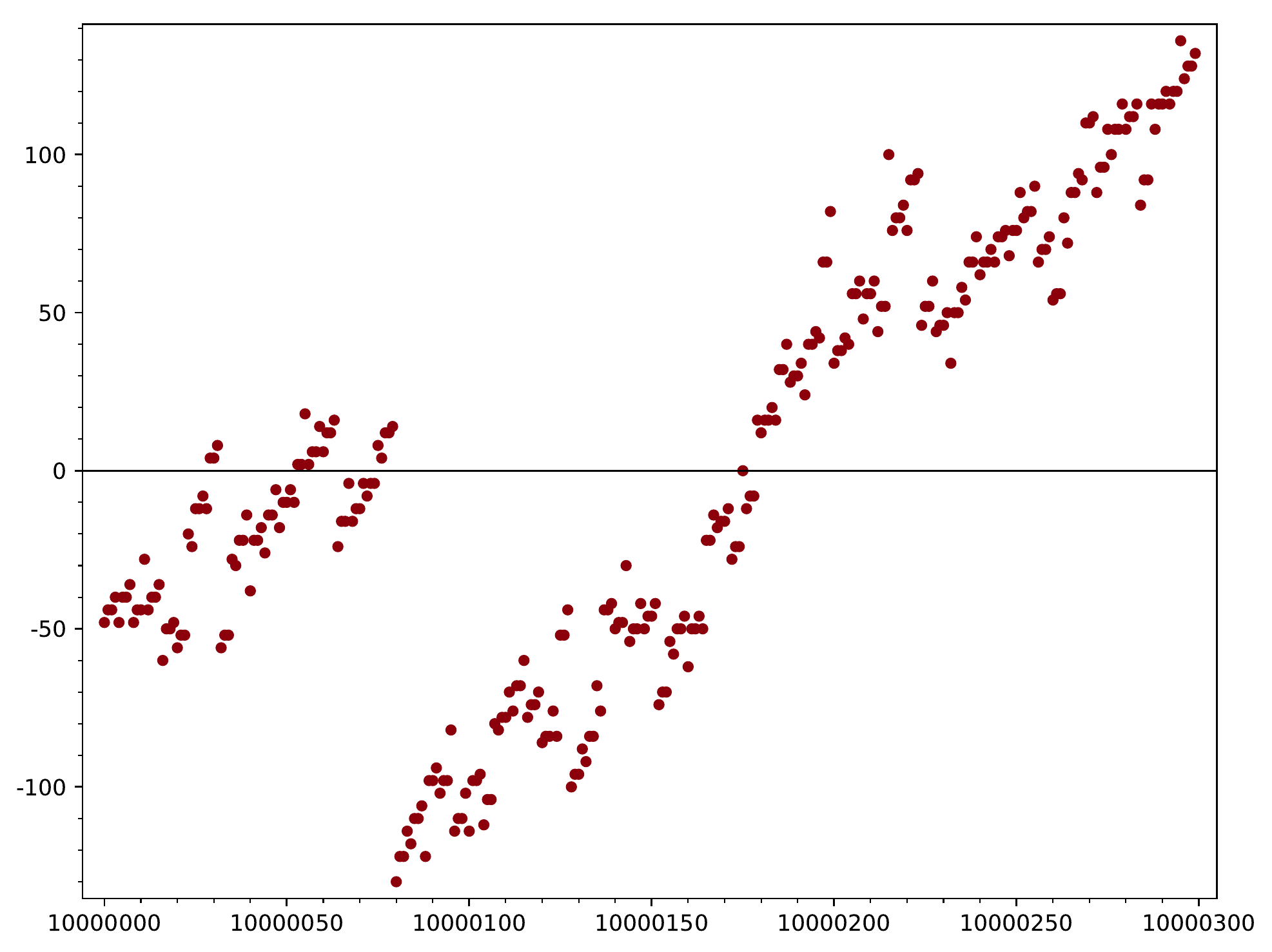}
%  \label{FigTomo5P}
 }
 }
\caption{Locally, in short intervals, in which some abnormal values of $D(n)$ occur, the partial 
averages 
may show a noisier effect.  This happens, for example, in the interval of length $300$ that starts 
at $10^7$.
}
 \label{Fig10la7p300}
 \end{figure}

Experimental results show that the partial averages of $D(n)$, that is, the sums 
$\sum_{n=1}^x D(n)$ 
tend to increase linearly with $x$ (see Figure~\ref{Fig2-1000}) and the same behavior is apparent 
for the averages calculated on shorter intervals. Although, in rare situations, the 
increase may be bumpy in shorter intervals that contain special $n$'s. For example, the larger jump 
in Figure~\ref{Fig10la7p300} is caused by the reach in divisors number
$n=10000080= 2^4 \cdot 3^2 \cdot 5 \cdot 17 \cdot 19 \cdot 43$, which implies $o(n)=48$, $e(n)=192$ 
and $D(n)=48-192=-144$.
% n=10000080,  = 2^4 * 3^2 * 5 * 17 * 19 * 43
% [48, 192]
% o(n)=48,  e(n)=192

Next, by \eqref{eqoe} the formula for the difference function is
\begin{equation}\label{eqDoe}
   \begin{split}
   D(n)=o(n)-e(n)=(1-\alpha)d(r), \quad\text{for $n=2^\alpha r$, $\alpha\ge 0$, $r$ odd.}
   \end{split}
\end{equation}
Notice that if $n$ is even but not divisible by four, than $e(n)=o(n)=d(n)$, so 
$D(n)=0$.

In the following, we assume $x$ is sufficiently large and calculate the average of $D(n)$ 
over the positive integers $n\le x$. Using formula \eqref{eqDoe}, we have:
\begin{equation}\label{eqDoe1}
   \begin{split}
   \sum_{n=1}^x D(n) & = 
   \sum_{\substack{r=1\\ r\text{ odd}}}^x 
   \sum_{\substack{\alpha=0\\ 2^\alpha r\le x}}^{\left[\frac{\log x}{\log 2}\right]} 
	    (1-\alpha)d(r)\,.
   \end{split}
\end{equation}
Let us denote the divisors sum over the odd integers by $I(t)$, that is 
\begin{equation*}%\label{}
   \begin{split}
   I(t) := \sum_{\substack{r=1\\ r\text{ odd}}}^t d(r)\,.
   \end{split}
\end{equation*}
Then, \eqref{eqDoe1} becomes
\begin{equation}\label{eqsumI}
   \begin{split}
   \sum_{n=1}^x D(n) & = I(x)-I(x/4)-2I(x/8)-\cdots-(1-\tau)I(x/2^\tau) + R(x)\,,
   \end{split}
\end{equation}
where $\tau = \left[\frac{\log x}{\log 2}\right]$ and 
\begin{equation}\label{eqR}
   \begin{split} 
   R(x)= \tau I(y),
   \end{split}
\end{equation}
for some $y\le x/2^{\tau+1}$, collects the remaining terms.

\smallskip

We need the following two lemmas.
%%%%%%%%%%%%%%%%%%%%%%%%%%%%%%%%%%%%%%%%%%%
\begin{lemma}\label{LemmaArmonicOdds}
 We have
 \begin{equation*}%\label{}
   \begin{split}
   A_o(x):=\sum_{\substack{1\le n\le x\\ n\ \mathrm{ odd}}}^x\frac 1n = \frac 12\log x+\frac{\log 
2}{2}+\frac{\gamma}{2}+O\left(\frac 1x\right),
   \end{split}
\end{equation*}
where $\gamma$ is Euler's constant.
\end{lemma}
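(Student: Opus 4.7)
The plan is to express the sum over odd integers as the difference of the full harmonic sum and the sum over even integers, then apply the classical asymptotic $\sum_{n \le x} 1/n = \log x + \gamma + O(1/x)$ twice.

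First I would write
\begin{equation*}
A_o(x) = \sum_{n\le x}\frac{1}{n} - \sum_{\substack{n\le x\\ n\text{ even}}}\frac{1}{n}.
\end{equation*}
The first sum is handled directly by the classical harmonic estimate, contributing $\log x + \gamma + O(1/x)$. For the second sum I would substitute $n = 2m$, yielding
\begin{equation*}
\sum_{\substack{n\le x\\ n\text{ even}}}\frac{1}{n} = \frac{1}{2}\sum_{m\le x/2}\frac{1}{m} = \frac{1}{2}\log(x/2) + \frac{\gamma}{2} + O(1/x) = \frac{1}{2}\log x - \frac{\log 2}{2} + \frac{\gamma}{2} + O(1/x).
\end{equation*}
Subtracting these two expressions gives
\begin{equation*}
A_o(x) = \log x + \gamma - \frac{1}{2}\log x + \frac{\log 2}{2} - \frac{\gamma}{2} + O(1/x) = \frac{1}{2}\log x + \frac{\log 2}{2} + \frac{\gamma}{2} + O(1/x),
\end{equation*}
which is the claimed identity.

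There is no real obstacle here; the only minor care required is in replacing $\log \lfloor x/2 \rfloor$ by $\log(x/2)$, which produces an extra error of size $O(1/x)$ that is absorbed into the stated remainder. An alternative route, equally short, would apply Abel summation to $\sum_{n\le x}\chi_o(n)/n$ where $\chi_o$ is the indicator of odd integers, together with the elementary count $\sum_{n\le t}\chi_o(n) = t/2 + O(1)$; this produces the same constants $\frac{\log 2}{2}$ and $\frac{\gamma}{2}$ after integration by parts.
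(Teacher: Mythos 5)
Your proposal is correct and matches the paper's argument: the paper likewise writes $A_o(x)=A(x)-\tfrac12 A(x/2)$, i.e.\ the odd harmonic sum as the full harmonic sum minus half the harmonic sum at $x/2$, and then applies the classical estimate $\sum_{n\le x}1/n=\log x+\gamma+O(1/x)$ to both pieces. The computation of the constants $\tfrac{\log 2}{2}$ and $\tfrac{\gamma}{2}$ and the $O(1/x)$ error is exactly as in the paper, so nothing further is needed.
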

\begin{proof}
 The harmonic sum up to $x$ is
 \begin{equation*}%\label{}
   \begin{split}
   A(x)=\sum_{1\le n\le x}\frac 1n = \log x+\gamma+O\left(\frac 1x\right).
   \end{split}
\end{equation*}
Then 
\begin{equation*}%\label{}
   \begin{split}
   A_o(x) & = A(x)-A(x/2)/2\\	
	    & = \log x+\gamma - \frac 12\log\frac x2 -\frac\gamma 2 + 
O\left(\frac 1x\right)\\
& = \frac 12\log x+\frac{\log 2}{2}+\frac{\gamma}{2}+O\left(\frac 1x\right),
   \end{split}
\end{equation*}
what had to be proved.
\end{proof}

The sum of odd divisors can be calculated by the well-known inclusion-exclusion Dirichlet method 
and this is the object of the next lemma.

%%%%%%%%%%%%%%%%%%%%%%%%%%%%%%%%%%%%%%%%%%
\begin{lemma}\label{LemmaDirichlet}
 We have
 \begin{equation*}%\label{}
   \begin{split}
   I(x)=  \sum_{\substack{n=1\\ n\ \mathrm{ odd}}}^x d(n) =
   \frac 14x\log x+x\left(\frac{\log 2}{2}+\frac\gamma 2 -\frac 14\right)+O\left(\sqrt x\right).
   \end{split}
\end{equation*}
\end{lemma}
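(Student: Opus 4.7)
The plan is to rewrite $I(x)$ via the Dirichlet hyperbola method, working from the start only with pairs of odd integers, and then feed in the asymptotic for $A_o$ from Lemma~\ref{LemmaArmonicOdds}.

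First I would note that for any odd $n$, a divisor $d\mid n$ must itself be odd, so $d(n)=\#\{(a,b): a,b\text{ odd}, ab=n\}$. Summing over odd $n\le x$ gives
\begin{equation*}
I(x)=\#\bigl\{(a,b)\in\NN^2: a,b\text{ odd},\ ab\le x\bigr\}.
\end{equation*}
This is the natural analogue of the classical divisor sum restricted to the sublattice of odd integers. Denote by $N_o(y):=\#\{m\le y: m\text{ odd}\}=\tfrac{y}{2}+O(1)$ the odd-counting function.

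Next I would apply Dirichlet's hyperbola trick, exploiting the symmetry $a\leftrightarrow b$:
\begin{equation*}
I(x)=2\sum_{\substack{a\le\sqrt{x}\\ a\text{ odd}}}N_o(x/a)-N_o(\sqrt{x})^2.
\end{equation*}
Substituting $N_o(y)=y/2+O(1)$ into each occurrence yields
\begin{equation*}
I(x)=x\sum_{\substack{a\le\sqrt{x}\\ a\text{ odd}}}\frac{1}{a}-\frac{x}{4}+O(\sqrt{x})
=x\,A_o(\sqrt{x})-\frac{x}{4}+O(\sqrt{x}),
\end{equation*}
since the count of odd $a\le\sqrt{x}$ is $O(\sqrt{x})$, which absorbs the $O(1)$ errors after multiplication.

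Finally I would plug in Lemma~\ref{LemmaArmonicOdds} at $\sqrt{x}$, which gives $A_o(\sqrt{x})=\tfrac{1}{4}\log x+\tfrac{\log 2}{2}+\tfrac{\gamma}{2}+O(1/\sqrt{x})$. Collecting the main and secondary terms produces the claimed expansion
\begin{equation*}
I(x)=\frac{1}{4}x\log x+x\Bigl(\frac{\log 2}{2}+\frac{\gamma}{2}-\frac{1}{4}\Bigr)+O(\sqrt{x}).
\end{equation*}
No step here is a genuine obstacle; the one place to be a little careful is bookkeeping the two $O(\sqrt{x})$ contributions (one from $2\sum N_o(x/a)$ and one from squaring $N_o(\sqrt{x})$) so that the constant $-\tfrac14$ emerges with the correct sign — this is the only thing that could easily be mis-handled in the otherwise routine hyperbola calculation.
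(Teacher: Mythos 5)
Your proposal is correct and follows essentially the same route as the paper: both rewrite $I(x)$ as a count of lattice points $(a,b)$ with $a,b$ odd and $ab\le x$, apply the Dirichlet hyperbola method to get $I(x)=x\,A_o(\sqrt{x})-\tfrac{x}{4}+O(\sqrt{x})$, and then insert Lemma~\ref{LemmaArmonicOdds} evaluated at $\sqrt{x}$. The bookkeeping of the error terms is handled just as in the paper, so there is nothing to add.
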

\begin{proof}
 First we write the sum of the divisors as a double sum that counts lattice points under a 
hyperbola:
 \begin{equation*}%\label{}
   \begin{split}
   I(x)=\sum_{\substack{n=1\\ n\ \mathrm{ odd}}}^x d(n) 
   = \sum_{\substack{1\le ab\le x\\ a,b\ \mathrm{ odd}}}^x 1\,.
   \end{split}
\end{equation*}
The contribution of the numerous smaller terms can be controlled efficiently counting them twice, 
in different order. Thus,  we have 
 \begin{equation*}%\label{}
   \begin{split}
   I(x)&=\sum_{\substack{1\le a\le\sqrt x\\ a\ \mathrm{ odd}}} \
	  \sum_{\substack{1\le b\le\frac xa\\ b\ \mathrm{ odd}}} 1 +
	\sum_{\substack{1\le b\le\sqrt x\\ b\ \mathrm{ odd}}} \
	  \sum_{\substack{1\le a\le\frac xb\\ a\ \mathrm{ odd}}} 1 -
	  \sum_{\substack{1\le a\le\sqrt x\\ a\ \mathrm{ odd}}}\
	  \sum_{\substack{1\le b\le\sqrt x\\ b\ \mathrm{ odd}}}  1\\
    & = 2\sum_{\substack{1\le a\le\sqrt x\\ a\ \mathrm{ odd}}} 
	\left(\frac{x}{2a} +O(1)\right)
	-\left(\frac{\sqrt{x}}{2}+O(1)\right)^2\\
    & = xA_o(\sqrt{x})-\frac{x}{4} + O\left(\sqrt x\right).	
   \end{split}
\end{equation*}
  Then, using Lemma~\ref{LemmaArmonicOdds}, we find that
  \begin{equation*}%\label{}
   \begin{split}
   I(x)&= x\left(\frac 12\log \sqrt x+\frac{\log 2}{2}+\frac{\gamma}{2}+O\Big(\frac 
1{\sqrt{x}}\Big)\right)
	  -\frac{x}{4} + O\left(\sqrt x\right)\\
	&= \frac 12x  \log \sqrt x +x\left(\frac{\log 2}{2}+\frac{\gamma}{2}-\frac 
14\right)+O\left(\sqrt x\right),
   \end{split}
\end{equation*}
which concludes the proof of the lemma.
\end{proof}
Now we are ready to obtain the estimate of the average of the difference of parity functions.

%%%%%%%%%%%%%%%%%%%%%%%%%%%%%%
\begin{proposition}\label{Proposition1}
 We have
 \begin{equation*}%\label{}
   \begin{split}
   \sum_{1\le n\le x}D(n)= \log 2 \cdot x +O\big(\sqrt x\big)\,.
   \end{split}
\end{equation*}
\end{proposition}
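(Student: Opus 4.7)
The plan is to start from equation \eqref{eqsumI} and apply Lemma~\ref{LemmaDirichlet} to every $I(x/2^\alpha)$ that appears, then exploit an algebraic cancellation to see that only a single main term survives. Writing
$$I(y) = \tfrac14\, y \log y + C y + O(\sqrt{y}), \qquad C := \tfrac{\log 2}{2} + \tfrac{\gamma}{2} - \tfrac14,$$
and expanding $\log(x/2^\alpha) = \log x - \alpha \log 2$, the contribution from the first terms in \eqref{eqsumI} is
$$\frac{x\log x}{4}\sum_{\alpha=0}^{\tau}\frac{1-\alpha}{2^\alpha}\;-\;\frac{x\log 2}{4}\sum_{\alpha=0}^{\tau}\frac{\alpha(1-\alpha)}{2^\alpha}\;+\;Cx\sum_{\alpha=0}^{\tau}\frac{1-\alpha}{2^\alpha}\;+\;\mathrm{(errors)}.$$

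The computational heart of the argument is that the relevant generating-function values at $t=1/2$ cooperate nicely. From the standard identities $\sum_{\alpha\ge 0} t^\alpha = (1-t)^{-1}$, $\sum_{\alpha\ge 0}\alpha t^\alpha = t(1-t)^{-2}$, and $\sum_{\alpha\ge 0}\alpha^2 t^\alpha = t(1+t)(1-t)^{-3}$, evaluated at $t=1/2$, one obtains
$$\sum_{\alpha=0}^{\infty}\frac{1-\alpha}{2^\alpha} = 2-2 = 0,\qquad \sum_{\alpha=0}^{\infty}\frac{\alpha(1-\alpha)}{2^\alpha} = 2-6 = -4.$$
The first identity kills both the $\log x$ term and the constant-$C$ term, while the second converts the $\alpha\log 2$ piece into exactly $-\tfrac{x\log 2}{4}\cdot(-4) = x\log 2$, which is the main term claimed.

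Finally I have to control three things. The tails of the two series from $\alpha=\tau+1$ to $\infty$ are of size $O(\tau/2^\tau)$ and $O(\tau^2/2^\tau)$ respectively; since $2^\tau\asymp x$, multiplying by $x\log x$ or $x$ gives at worst $O((\log x)^2)$, absorbed in $O(\sqrt x)$. The $O(\sqrt{\,\cdot\,})$ error in Lemma~\ref{LemmaDirichlet} contributes $\sum_{\alpha=0}^\tau |1-\alpha|\, O(\sqrt{x/2^\alpha}) = O(\sqrt{x})$, because $\sum_{\alpha\ge 0}(1+\alpha)\,2^{-\alpha/2}$ converges. The remainder $R(x)=\tau I(y)$ from \eqref{eqR} is $O(\log x)$, since $y \le x/2^{\tau+1} < 1$ forces $I(y)=0$ in fact, or at worst $O(1)$. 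Summing these estimates yields $\sum_{n\le x} D(n) = x\log 2 + O(\sqrt x)$.

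I do not anticipate a significant obstacle: the argument is essentially a Dirichlet hyperbola estimate (already done in Lemma~\ref{LemmaDirichlet}) plus a finite dyadic summation. The only mildly subtle point is recognizing the double cancellation that forces both the $x\log x$ and constant-multiple-of-$x$ terms to drop out, leaving the $\alpha\log 2$ term as the unique main contributor — this is what makes the final slope $\log 2$ rather than something involving $\gamma$ or $\log x$.
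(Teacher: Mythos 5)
Your proposal is correct and follows essentially the same route as the paper: expand each $I(x/2^\alpha)$ in \eqref{eqsumI} via Lemma~\ref{LemmaDirichlet}, evaluate the dyadic sums $\sum_\alpha (1-\alpha)2^{-\alpha}=0$ and $\sum_\alpha \alpha(1-\alpha)2^{-\alpha}=-4$ in closed form (these are the paper's $S_1$ and $S_2$), and control the tails, the $O(\sqrt{\cdot})$ errors, and $R(x)$ exactly as done there. The cancellation you highlight is precisely the mechanism in the paper's proof, so nothing further is needed.
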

\begin{proof}
 Replacing the corresponding terms of the sum in~\eqref{eqsumI} and in the relation \eqref{eqR} by 
their estimate from Lemma~\ref{LemmaDirichlet}, we have
\begin{equation}\label{eqsumII}
   \begin{split}
   \sum_{n=1}^x D(n) & = I(x)-I(x/4)-2I(x/8)-\cdots-(1-\tau)I(x/2^\tau) + R(x)\\
   &=\tfrac 14 x\log x S_1(\tau)+\tfrac{\log 2}{4}xS_2(\tau) + 
   \tfrac{2\log 2+2\gamma-1}{4}xS_1(\tau)+
   O\big(\sqrt x S_3(\tau)\big) +R(x)\,,  
   \end{split}
\end{equation}
where we denoted
\begin{equation*}%\label{}
   \begin{split}
   S_1(\tau)&=1-\frac{1}{2^2}-\frac{2}{2^3}-\cdots-\frac{\tau-1}{2^\tau},\\
   S_2(\tau)&=\frac{1\cdot 2}{2^2}+\frac{2\cdot 3}{2^3}+\cdots+\frac{(\tau-1)\tau}{2^\tau},\\
   S_3(\tau)&=1-\frac{1}{2^{2/2}}-\frac{2}{2^{3/2}}-\cdots-\frac{\tau-1}{2^{\tau/2}}
   \end{split}
\end{equation*}
and $\tau = \left[\frac{\log x}{\log 2}\right]$.
The sums $S_1(\tau), S_2(\tau), S_3(\tau)$ can be added and expressed in closed-form and then their 
sizes can be easily evaluated. Thus we find that all terms except the second from the right hand 
side of \eqref{eqsumII} are no larger than $O(\sqrt x)$.
Then, since 
\[
 S_2(\tau)=4+O\left(\frac{\log x}{x}\right),
\]
the main term on the right hand side of \eqref{eqsumII} is
\[
 \frac{\log 2}{4}xS_2(\tau) = x\log 2+O(\log x),
\]
which concludes the proof.
\end{proof}
Notice that the experiment drawn in Figure~\ref{Fig2-1000} is confirmed by the slope from the above 
proposition since $\log 2\approx 0.69314$.

%%%%%%%%%%%%%%%%%%%%%%%%%%%%%%%%%%%%%%%%%%%%%%%%%%%%%%%%%%%%%%%%%%%%%%%%%%%
\section{Infinite words and Dirichlet series}\label{SectionWordsAndDirichletSeries}

Let $A$ be a finite alphabet. Given a map $H : A\rightarrow\mathbb{C}$ and an infinite
word $w:\mathbb{N}\rightarrow A$, we compose $H$ with $w$ and consider the associated
Dirichlet series
\begin{equation}\label{e1}
F(H, w, s):= \sum_{n=1}^{\infty} \frac{H(w(n))}{n^s}\,,
\end{equation}
which is absolutely convergent in the half-plane $\Re s > 1$.  The analytic function 
$F(H, w, s)$ captures some properties of the given word $w$. If $H$ is injective, the
word is uniquely determined by the function  $F(H, w,  s)$. More precisely,
the coefficients $H(w(n))$ can be recovered from $F(H, w, s)$ via Perron type formulas.
For any positive integer $n$, any $x$ in the interval $(n, n+1)$, and any real number
$c>1$,
\begin{equation}\label{e2}
\sum_{j=1}^n H(w(j)) =\frac1{2\pi i}\int_{c-i\infty}^{c+i\infty} \frac{F(H, w, s) x^s}{s}\, ds\,,
\end{equation}
where the integral above is to be interpreted as the symmetric limit 
$\lim_{T\rightarrow\infty}\int_{c-iT}^{c+iT}\,.$

Working with the Dirichlet series $F(H, w, s)$ in the half-plane of convergence $\Re s > 1$
may reveal various properties of the given words $w$. More interesting are cases when the functions $F(H, w, s)$ have analytic or meromorphic continuation to larger half-planes.
For example, for a word of the form $w = aaaa\dots$, we have
\begin{equation}\label{e3}
F(H, w, s) = \sum_{n=1}^{\infty} \frac{H(a)}{n^s} = H(a) \zeta(s),
\end{equation}
a constant multiple of the Riemann zeta function $\zeta(s)$. In this case $F(H, w, s)$
has a meromorphic continuation to the entire complex plane, the only pole being 
a simple pole at $s = 1$ 
(for $H(a)$ nonzero). Similarly, any Dirichlet $L-$function, and more generally any finite
linear combination of Dirichlet $L-$functions $c_1 L(s, \chi_1)+\cdots+ c_k L(s, \chi_k)$
is of the form
\begin{equation}\label{e4}
c_1 L(s, \chi_1)+\cdots+ c_k L(s, \chi_k) = F(H, w, s),
\end{equation}
for some alphabet $A$, some map $H : A\rightarrow\mathbb{C}$, and some 
periodic word $w$. In all these cases $F(H, w, s)$ has analytic continuation to the entire
complex plane, with a possible pole at $s=1$ if one or more of the characters
$\chi_1,\dots, \chi_k$ are principal.

Let now $A$ be an alphabet, $H : A\rightarrow\mathbb{C}$, and let $\lambda$ be
a real number satisfying $0\le \lambda <1$. Suppose $w$ and $w'$ are two words
that coincide at enough many places so that for any $\varepsilon >0$,
\begin{equation}\label{e5}
\lim_{n\rightarrow\infty}\frac{|\{j: 1\le j\le n, w'(j) \ne w{j}\}|}{n^{\lambda+\varepsilon}}
= 0\,.
\end{equation}

Then it is easy to see that the difference $F(H, w', s) - F(H, w, s)$ has analytic
continuation to the half-plane $\Re s > \lambda$. Therefore in such cases
$F(H, w', s)$ has analytic (respectively meromorphic) continuation to the half-plane 
$\Re s > \lambda$ if and only if $F(H, w, s)$ has the same property.

%%%%%%%%%%%%%%%%%%%%%%%%%%%%%%%%%%%%%%%%%%%%%%%%%%%%%%%%%%%%%%%%%%%%%%%%%%%
\section{Dirichlet series associated to Sturmian words}\label{SectionDirichletSeries}

A one-parameter family of Dirichlet series whose coefficients are Sturmian words is studied by 
Kwon~\cite{K2015}. For our purpose, we have proceed as follows.
If $w$ is a Sturmian word, $F(H, w, s)$ has
meromorphic continuation to the half-plane $\Re s > 0$, the only
pole being a simple pole at $s=1$. Therefore
for any word $w'$ which coincides with a Sturmian word $w$ at enough many
positions so that \eqref{e5} holds for some $0\le \lambda < 1$, the 
corresponding Dirichlet series $F(H, w', s)$ has meromorphic continuation
to the half-plane $\Re s > \lambda$,  the only
pole being a simple pole at $s=1$.

Let $w$ be a Sturmian word over the alphabet $A = \{ a, b\}$, and assume for 
simplicity that $H(a) = 0$ and $H(b) = 1$. The proportion of positions $j$ up to $n$
for which $w(j) = b$ has a limit as $n$ tends to infinity. Let $\beta_w$ denote this limit,
\begin{equation}\label{e6}
\beta_w:= \lim_{n\rightarrow \infty} \frac {|\{ 1\le j\le n :  w(j) = b\}|}{n}
=\lim_{n\rightarrow \infty} \frac 1n\sum_{1\le j\le n} H(w(j))\,.
\end{equation}

Any two factors of $w$ having the same length contain about the same number of
$a$'s and~$b$'s. To be precise, for any positive integers $n_1$, $n_2$ and $L$,
\begin{equation}\label{e7}
\left| \sum_{n_1\le j < n_1+L} H(w(j)) -  \sum_{n_2\le j < n_2+L} H(w(j))\right| \le 1.
\end{equation}
Therefore, for any positive integer $n$,
\begin{equation}\label{e8}
\left| \beta_w n -  \sum_{1\le j \le n} H(w(j))\right| \le 1.
\end{equation}

For $\Re s>1$, we rewrite $F(H, w, s)$ as 
\begin{equation}\label{e9}
F(H, w, s) = \sum_{n=1}^{\infty} \frac{H(w(n))}{n^s} = 
\sum_{n=1}^{\infty} H(w(n)) \int_n^{\infty} \frac s{t^{s+1}}\,dt.
\end{equation}

We further have
\begin{equation}\label{e10}
F(H, w, s) = s\int_1^{\infty} \frac {\sum_{n\le t} H(w(n)) }{t^{s+1}}\,dt,
\end{equation}
which may be rewritten as
\begin{equation}\label{e11}
F(H, w, s) =  \frac{\beta_w}{s-1} + \beta_w
-s\int_1^{\infty} \frac {\beta_w t - \sum_{n\le t} H(w(n)) }{t^{s+1}} dt.
\end{equation}

By \eqref{e8}, the numerator under the integral on the right side of \eqref{e11}
is $O(1)$. It follows that this integral represents an analytic function of $s$ in the
entire half-plane $\Re s > 0$. In conclusion, $F (H, w, s)$ has an analytic
continuation to the half-plane $\Re s > 0$, with the exception of a simple pole
at $s=1$, with residue $\beta_w$.

In order to take advantage of the pole at $s=1$ in concrete applications, we need
to have some knowledge on the growth of $|F(H, w, s)|$ for $s = \sigma +it$ with $|t|$
large and $\sigma$ not too close to zero.  For any fixed $\delta>0$, one has
\begin{equation}\label{e12}
\left| F(H, w, \sigma+it)\right| = O_{\delta} \left(|t| ^{1- \sigma+\delta}\right),
\end{equation}
uniformly for all $\delta \le \sigma \le 1+ \delta$ and $|t| \ge 1$. 
For points on the vertical line
$\sigma = 1 + \delta$, \eqref{e12} follows directly from \eqref{e1}.
For points on the vertical line
$\sigma = \delta$, \eqref{e12} follows from \eqref{e11}, taking into
account that the numerator under the integral in \eqref{e11}
is $O(1)$. Then the convexity bound \eqref{e12}, for all  
$\delta \le \sigma \le 1 + \delta$, follows from the general
theory of Dirichlet series (see Titchmarsh~\cite{T1939}).

%%%%%%%%%%%%%%%%%%%%%%%%%%%%%%%%%%%%%%%%%%%%%%%%%%%%%%%%%%%%%%%%%%%%%%%%%%%
\section{Proof of Theorem~\ref{Theorem1}}\label{SectionProof}

To prove the sharp asymptotic formula~\eqref{eq24}, we will make use of the properties of  $F(H, w, 
s)$ discussed
above.

Recall that the Dirichlet convolution of two arithmetical functions 
$f, g: \mathbb{N}\rightarrow\mathbb{C}$ is the function 
$f*g: \mathbb{N}\rightarrow\mathbb{C}$ defined by
\begin{equation}\label{e15}
(f*g)(n) = \sum_{d | n} f(d) g\left(\frac nd\right).
\end{equation}

Let us observe that $D$ is the Dirichlet convolution of $H\circ w$ with the function
$h$ given by $h(n) = (-1)^{n+1}$. Indeed,
\begin{equation}\label{e16}
((H\circ w)*h)(n) = \sum_{d | n} H(w(d)) h\left(\frac nd\right) = 
\sum_{\substack{d | n\\ w(d) = b}} (-1)^{\frac nd +1} = D_w(n)\,.
\end{equation}

Dirichlet convolution corresponds to multiplication of the associated
Dirichlet series. Therefore, in the half-plane of absolute convergence $\Re s > 1$, 
\begin{equation}\label{e17}
\sum_{n=1}^{\infty} \frac{D_w(n)}{n^s} = F(H, w, s) \sum_{n=1}^{\infty} \frac{(-1)^{n+1}}{n^s}\,.
\end{equation}
Here the sum on the right side of \eqref{e17} equals
\begin{equation}\label{e18}
1-\frac1{2^s}+\frac1{3^s} - \frac1{4^s} +\dots = \left(1-\frac2{2^s}\right)
\sum_{n=1}^{\infty} \frac 1{n^s}\,,
\end{equation}
and we obtain
\begin{equation}\label{e19}
\sum_{n=1}^{\infty} \frac{D_w(n)}{n^s} = \left(1-\frac1{2^{s-1}}\right) \zeta(s)
F(H, w, s) \,.
\end{equation}

Next, we use a variant of Perron's formula~\cite{T1986} in combination with \eqref{e14} and  
\eqref{e19}
in order to express $M_w(x)$ as an integral over a vertical line. For any real numbers
$x\ge 1$ and $c>1$,
\begin{equation}\label{e20}
M_w(x) = \sum_{n\le x} D_w(n) \left(1-\frac nx\right) = 
\frac1{2\pi i}\int_{c-i\infty}^{c+i\infty} \frac{ \left(1-\frac1{2^{s-1}}\right) \zeta(s)
F(H, w, s) x^s}{s(s+1)}\, ds\,.
\end{equation}

The integrand on the right side of \eqref{e20} is analytic on the entire half-plane
$\Re s > 0$, except for a pole at $s=1$.  Notice that at $s=1$ both $\zeta(s)$ and
$F(H, w, s)$ have simple poles, while $1- 1/2^{s-1}$ has a simple zero. Therefore
the integrand on the right side of (20) has a simple pole at $s=1$. The Taylor series
expansion of $1- 1/2^{s-1}$ about $s = 1$ is
\begin{equation}\label{e21}
1-\frac1{2^{s-1}} = 1 -e^{-(s-1)\log 2} = (s-1)\log 2 +\dots
\end{equation}

Also, as we know,
\begin{equation}\label{e22}
\zeta(s) = \frac{1}{s-1} +  \text{analytic}
\end{equation}
and
\begin{equation}\label{e23}
F(H, w, s) = \frac{\beta_w}{s-1} +  \text{analytic}.
\end{equation}

Using  \eqref{e21}, \eqref{e22} and \eqref{e23} it follows that the residue at $s=1$ 
of the integrand on the right side of \eqref{e20} equals
$\frac{\beta_w\log 2}{2}\, x.$

Let now $T>1$  be a parameter, whose precise value will be given later.
We fix a small $\delta >0$, and then shift the line of integration on the right side 
of \eqref{e20} to the left. In doing so, we encounter the pole at $s=1$.
We choose the new contour as follows.
We start vertically from $1+\delta-i\infty$ to $1+\delta -iT$, then move left to $\delta - iT$,
then move vertically to $\delta+iT$, then go horizontally to $1+\delta +iT$, and then
vertically to $1+\delta+i\infty$.  Next, employing \eqref{e12} in combination with
known bounds for $\zeta(s)$ on the critical strip, we find
% (details to be inserted later)
that if we choose~$T = x^{2/3}$, then the integral over the new contour is bounded as
$O_\delta\left(x^{\frac13+\delta}\right)$. Lastly, by the residue theorem we obtain
the desired asymptotic formula,
which completes the proof of the theorem.

\bigskip
\bigskip

\bigskip
\bigskip

\textit{Acknowledgements. } {\small Calculations and plots created using the free open-source 
mathematics
software system \texttt{SAGE}: 
\href{http://www.sagemath.org}{http://www.sagemath.org}}.

%%%%%%%%%%%%%%%%%%%%%%%%%%%%%%%%%%%%%%%%%%%%%%5

\end{document}